\documentclass[a4paper,12pt]{article}
\usepackage{amsmath,amsfonts,amssymb,amsthm}
\usepackage{graphicx}
\usepackage{bm}
\usepackage{color}
\usepackage{mathtools}
\usepackage{geometry}
\usepackage{cite}

\newtheorem{example}{Example}%

\newtheorem{statement}{Statement}

\newtheorem{theorem}{Theorem}
%

%%\unnumbered% uncomment this for unnumbered level heads

\DeclareMathOperator{\Span}{span}
\DeclareMathOperator{\Ker}{Ker}
\DeclareMathOperator{\Det}{det}
\DeclareMathOperator{\Rank}{rank}
\DeclareMathOperator{\Dim}{dim}

\makeatletter
\renewcommand*\env@matrix[1][*\c@MaxMatrixCols c]{%
\hskip -\arraycolsep
\let\@ifnextchar\new@ifnextchar
\array{#1}}
\makeatother

\title{Algebraic Modification of the Method of Undetermined Coefficients For Solving Nonhomogeneous Linear Difference Equations}

\author{Timofey Lomonosov}

\date{%
    \textit{HSE University}\\
    \textit{Pokrovskii Bd. 11, 109028 Moscow, Russia}\\%
    \today
}

\begin{document}

\maketitle

\abstract{In this paper, an algebraic modification of the method of undetermined coefficients for solving nonhomogeneous linear stationary difference equations for quasipolynomial right-hand sides is proposed. Although the classical method of undetermined coefficients is well-known in both differential equations and difference equations case, its application in the difference equations case is severely limited. For example, it is hard to apply for rather complex expressions that can arise in case of complex quasipolynomials and resonance. The novelty of the research is the proposition of an algebraic modification to the method. That modification eliminates major drawbacks of the primary method and also allows to modify the superposition principle to apply the method to the entire difference equation at once without dividing the problem into several less complicated ones. The superposition principle in matrix form is formulated.}

\textit{Keywords}: difference equations, block matrices, matrix difference operator

\section{Introduction}\label{sec1}

The idea of representing various mathematical processes as operators has long been studied. Vast literature is dedicated to representing calculus, differentiation, and integration in such a form, see, e.g., \cite{C18,S02,F20}. There, the authors proposed a matrix differential operator method for finding a particular solution to certain differential equations, which, essentially, is a modification to a method of undetermined coefficients. With the new method, it is possible to get a system of linear algebraic equations with respect to decomposition coefficients of a quasipolynomial right-hand side of the equation without actually using the differentiation operation.

Of course, one may object that the method of undetermined coefficients is quite well described in all the classical textbooks on differential equations (see, e.g., \cite{Z11,BD12,NSS12} and question necessity of its modification. The answer to this objection follows: the method of undetermined coefficients has two significant drawbacks. First, the method is complex to apply in practice to the most interesting cases of equations with complex resonance. Second, because of its complexity, it is impossible to put those most interesting problems into the exam sheets.

Now, along differential equations, it is difference equations that are also widely used in practice. However, the difference equations, are often left behind. Nonetheless, the theory of difference equations is applicable in many areas, that require modeling of different systems. The most well-known of those models is, of course, market cobweb model. Cobweb model with inventory, for instance, is described with a second-order difference equation. Besides, applying the method of undetermined coefficients to difference equations is much more time-consuming than in the differential case, see, e.g., \cite{R01}, since in complex case we have to deal with equations of the form $\sin\alpha(n+k)$ and $\cos\alpha(n+k)$. That drastically reduces the quantity of information that one could share with students within given hours. Note that because of this, even in \cite{R01}, there are no exercises on those difference equations, for which complex eigenvalues of the operator in the left-hand side of the equation lie outside a rather limited range of arguments.

In this paper, similarly to \cite{C18,F20}, an algebraic modification of the method of undetermined coefficients is developed for solving higher-order linear difference stationary equations. Theoretical arguments for the validity of the method are more complicated compared to the differential case because of the existence of a degenerate case for difference equations. This modification is new in the area of difference equations, and it can potentially be taught to students along with more traditional approaches. That not only allows to develop their cognitive abilities, but also allows to show them the interconnection between various mathematical disciplines.

\section{Geometry of the Simplest Shift Operator}\label{sec2}

If we want students to better comprehend the underlying principles in nonhomogeneous linear difference equations with constant coefficients, then it is quite useful to pose some algebraic problems related to properties of the simplest shift operator.

At first, let $\mathbb N_0 = (0,1,2,\ldots)$ be an extended set of natural numbers. We consider the space $\mathbb C^{\mathbb N_0}$ of all \textit{complex-valued} sequences indexed from zero. Let $e_0=(1,0,0,0,\ldots)$. Then $ \Span\{e_0\}$ is a subspace of $\mathbb C^{\mathbb N_0}$. Now let $S$ be the quotient space $\mathbb C^{\mathbb N_0}\setminus\Span\{e_0\}$.

The following statements are quite easy to prove.
%\begin{task}
%We consider an operator $T:S \to S$ which acts by the rule $(Ty)_n=y_{n+1}$, i.e. shifts the sequence one element to the left. We call it \textit{the simplest shift operator}. Obviously, the introduced operator is a linear operator. Find its eigenvalues.
%\end{task}
%The solution is quite straightforward and the answer is in the following statement.
\begin{statement}
Any number $\lambda \in \mathbb C \setminus \{0\}$ is an eigenvalue of the operator $T$.
\end{statement}
\begin{proof}
It is well known that the initial-value problem for difference equation $$y_{n+1}=\lambda y_n, \, y_0=C$$ has a general solution $y=C\lambda^n$ for any $\lambda \in \mathbb C \setminus \{0\}$. 

That is, there exists a nonzero sequence $y_n \in S$ such that $Ty = \lambda y$.

For $\lambda = 0$ we have a degenerate equation $$y_n=0, y_0=C$$ which has infinitely many solutions of the form $\{C,0,0,\ldots\} \subset \Span\{e_0\}$. As $S$ is a quotient space by that very space, the equation in question has a unique solution $y=0$ in $S$ (i.e., it is zero everywhere except maybe the 0-th element of the sequence). Hence, $\lambda = 0$ is not the eigenvalue of the operator.
\end{proof}
%The next exercise is more tedious for the students.
%\begin{task}
%For any eigenvalue of the operator $T$ find the corresponding \textit{generalized} eigenspace of the operator.
%\end{task}
%The answer to this exercise is given in the next statement.
\begin{statement}
Let $\lambda \in \mathbb C \setminus \{0\}$. Then the generalized eigenspace of the operator $T$ that corresponds to $\lambda$ is given by $V_\lambda = \bigcup\limits_{k=0}^{\infty} \{n^k\lambda^n\}$.
\end{statement}
\begin{proof}
Fix an arbitrary $\lambda \in \mathbb C \setminus \{0\}$. The generalized eigensequence $\{y_n\}_{n=1}^{\infty}$ of rank $m$ of the operator $T$ satisfies the equation $(T-\lambda I)^my=0$ given that for any integer $1 \leq k \leq m - 1$ $(T-\lambda I)^ky \ne 0$.

The equation $(T-\lambda I)^m y=0$ can be solved as in \cite{G71} with its fundamental system of solutions being $\{\lambda^n,n\lambda^n,\ldots,n^{m-1}\lambda^n\}$. However, only the latter sequence is a generalized eigensequence of rank $m$, as all the preceding solutions are also solutions to an equation of the same form with a lower degree of the operator on the left-hand side.
\end{proof}

Now that we know the geometry of the shift operator, we can proceed further.

\section{Algebraic Approach to Solving Linear Nonhomogeneous Difference Equations with Constant Coefficients}

Consider the space $L_\mu^s = \Span\{\mu^n,n\mu^n,\ldots,n^s\mu^n\} \subset S$ where $\mu \in \mathbb C \setminus \{0\}$ and $s \in \mathbb N$.

\begin{theorem}\label{geom}
\par 1. The restriction $T_{\mu}^s = T\rvert _{L_\mu^s}$ of the operator $T$ onto $L_{\mu}^s$ is an automorphism of the subspace $L_{\mu}^s$, and the matrix $\mathcal T_\mu^s$ of the restriction operator in the basis $\{\mu^n,n\mu^n,\ldots,n^s\mu^n\}$ has the form
$$\mathcal T_{\mu}^s = \mu \begin{pmatrix}1&1&1&\ldots&1\\0&1&2&\ldots&s \choose 1\\0&0&1&\ldots&s \choose 2\\\ldots & \ldots & \ldots & \ldots & \ldots\\0&0&0&\ldots&1\end{pmatrix}.$$

\par 2. There exists a basis in $L_\mu^s$ such that the matrix $\mathcal T_{\mu}^s$ in that basis has the form 
$$\mathcal T_{\mu}^s = \mu I_{s+1} + U_{s+1}$$
where $I_k$ is a $k$-th order identity matrix and $U_k$ is a $k$-th order upper shift matrix (i.e., $U_{kl}=\delta_{i+1,j}$ where $\delta_{ij}$ is the Kronecker delta symbol). That means that the Jordan form of the $\mathcal T_{\mu}^s$ matrix consists of a single cell.
\end{theorem}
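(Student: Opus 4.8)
The plan is to reduce everything to a single computation — the binomial expansion of $(n+1)^k$ — and then to read both parts off from it. Write $v_k$ for the sequence $n^k\mu^n$, $0\le k\le s$. First I would check that $v_0,\dots,v_s$ really is a basis of $L_\mu^s$: a vanishing combination $\sum_k c_k n^k\mu^n$ forces the polynomial $\sum_k c_k x^k$ to vanish at every $n\in\mathbb N_0$, hence to be the zero polynomial, so all $c_k=0$; and each $v_k$ is nonzero in the quotient $S$. The core identity is then
$$
T v_k \;=\; (n+1)^k\mu^{n+1} \;=\; \mu\sum_{j=0}^{k}\binom{k}{j}v_j ,
$$
which shows at once that $T$ maps $L_\mu^s$ into itself and, read column by column, produces exactly the matrix $\mathcal T_\mu^s$ of the statement (its $(i,j)$ entry, rows and columns indexed from $0$, being $\mu\binom{j}{i}$). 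Since this matrix is upper triangular with every diagonal entry equal to $\mu\ne0$, its determinant is $\mu^{s+1}\ne0$, so $T_\mu^s$ is invertible, i.e.\ an automorphism of $L_\mu^s$. That settles Part~1.

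For Part~2 I would pass to the nilpotent operator $N:=T_\mu^s-\mu I$, whose matrix is the strictly upper triangular part of $\mathcal T_\mu^s$, and extract from the same identity that for $1\le k\le s$
$$
N v_k \;=\; \mu\sum_{j=0}^{k-1}\binom{k}{j}v_j \;=\; \mu k\, v_{k-1} + (\text{a combination of } v_0,\dots,v_{k-2}),
$$
while $Nv_0=0$. The decisive point is that the coefficient $\mu k$ of $v_{k-1}$ is nonzero, so $N$ lowers the polynomial degree by exactly one; from this I would conclude that $N$ maps $\Span\{v_0,\dots,v_k\}$ onto $\Span\{v_0,\dots,v_{k-1}\}$ for each $k\ge1$, so $\Rank N=s$ and $\dim\Ker N=1$. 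A nilpotent operator on an $(s+1)$-dimensional space with one-dimensional kernel has a single Jordan cell, which is the claim. To exhibit the basis concretely I would then put $w_{s-j}:=N^jv_s$ for $j=0,\dots,s$: the degree-lowering property makes $N^jv_s$ equal to a nonzero multiple of $v_{s-j}$ plus lower-order terms, so the transition matrix to $\{w_0,\dots,w_s\}$ is triangular with nonzero diagonal and $\{w_0,\dots,w_s\}$ is again a basis; moreover $N^sv_s=\mu^s s!\,v_0\ne0$ and $N^{s+1}v_s=0$, so $Nw_k=w_{k-1}$ for $k\ge1$ and $Nw_0=0$ by construction, whence $N$ has matrix $U_{s+1}$ and $T_\mu^s=\mu I_{s+1}+U_{s+1}$ in this basis.

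I do not expect a genuine obstacle here — the theorem is essentially bookkeeping around the binomial identity — but the one point that needs care is the hypothesis $\mu\ne0$. It enters twice: to get linear independence of the $v_k$ (so that $L_\mu^s$ even has dimension $s+1$) and, more subtly, to guarantee that the subdiagonal coefficients $\mu k$ do not vanish, which is exactly what forces the Jordan form to be one cell rather than several. This is the "degenerate case'' flagged in the introduction: for $\mu=0$ both the space and the operator collapse and the statement fails, so the argument must keep $\mu\ne0$ in view at every step.
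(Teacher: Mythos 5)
Your proof is correct, and Part 1 coincides with the paper's argument: both rest on the binomial expansion $(n+1)^k\mu^{n+1}=\mu\sum_{j=0}^k\binom{k}{j}n^j\mu^n$, read off column by column (you additionally verify linear independence of $\{n^k\mu^n\}$ and invertibility via the determinant $\mu^{s+1}\ne 0$, which the paper leaves implicit). For Part 2, however, you take a genuinely different route. The paper observes that $\mathcal T_\mu^s=\mu P_n$ for the upper-triangular Pascal matrix $P_n$ and invokes Callan's result that conjugation by the Stirling-number matrix $S_n$, followed by the diagonal matrix $D=\operatorname{diag}\{0!,1!,\ldots,s!\}$, brings $P_n$ to a single Jordan cell with eigenvalue $1$; multiplying by $\mu$ then gives a single cell with eigenvalue $\mu$. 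You instead work directly with the nilpotent operator $N=T_\mu^s-\mu I$, use the fact that the subdiagonal coefficient $\mu k$ in $Nv_k=\mu k\,v_{k-1}+\cdots$ is nonzero to get $\Rank N=s$ and $\Dim\Ker N=1$ (hence one Jordan block), and then exhibit the Jordan basis explicitly as $w_{s-j}=N^j v_s$. Your argument is self-contained and more elementary — it avoids the external Pascal/Stirling machinery entirely and produces the change-of-basis concretely — while the paper's citation-based route is shorter on the page and connects the operator to a known family of Pascal-related matrices. Your closing remark that $\mu\ne 0$ is needed both for $\Dim L_\mu^s=s+1$ and for the single-cell conclusion is a worthwhile observation that the paper does not make explicit.
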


\begin{proof}

\par 1. For $k = 0$ we simply have $(Ty)_n=\mu^{n+1}=\mu\cdot\mu^n$.

Let $y_n=n^k\mu^n$ for some natural $k$ such that $1 \leq k \leq s$. Then we have
$$(Ty)_n=(n+1)^k\mu^{n+1}=\mu \cdot (n+1)^k\mu^n = \mu \sum\limits_{j=0}^k {k \choose j} n^j \cdot \mu^n = \mu\sum\limits_{j=0}^k {k \choose j} n^j\mu^n.$$

\par 2. Note that in our case $\mathcal T_{\mu}^s = \mu P_n$, where $P_n$ is an $n \times n$ upper triangular matrix of binomial coefficients. It is well known that if $S_n$ is a $n \times n$ matrix of Stirling partition numbers, then the following identity holds \cite{C02}:
$$S_nP_nS_n^{-1} = \begin{pmatrix}
1 & 1 & 0 & \ldots & 0\\
0 & 1 & 2 & \ldots & 0\\
\ldots & \ldots & \ldots & \ldots & \ldots\\
0&0&0&\ldots&s\\
0&0&0&\ldots&1
\end{pmatrix}.$$

Now, $S_n^{-1}P_nS_n$ is similar to a Jordan matrix via $$DS_n^{-1}P_nS_nD^{-1}=J(P_n)$$ where $D=\operatorname{diag}\{0!,1!,2!,\ldots,s!\}$. Hence, the Jordan form of $P_n$ consists of a single block with $\lambda = 1$.

Therefore, the matrix $\mu P_n$ consists of a single block with $\lambda = \mu$.

\end{proof}

For any complex $\mu=\lvert\mu\rvert e^{i\varphi}$ that has a nonzero imaginary part, let
\begin{gather*}L_{\mu,C}^s = \Span\{\lvert\mu\rvert^n\sin n\varphi, \lvert\mu\rvert^n\cos n\varphi, n\lvert \mu \rvert ^n\sin n\varphi, \\ n\lvert\mu\rvert^n\cos n\varphi,\ldots,n^s\lvert \mu \rvert^n\sin n\varphi, n^s\lvert \mu \rvert^n \cos n\varphi\}\end{gather*} be the space optained from $L_\mu^s$ by restricting the scalars to the real field and the elements to the real-valued sequences, and changing its basis. Let also $R \coloneqq T^n+a^{(1)}T^{n-1}+\ldots+a^{(k)}I$ be the difference operator on the left-hand side.

\begin{theorem}
\par 1. The restriction $T_{\mu,C}^s = T\rvert _{L_{\mu,C}^s}$ of the operator $T$ onto $L_{\mu,C}^s$ is an automorphism of the subspace $L_{\mu,C}^s$, and the matrix $\mathcal T_{\mu,C}^s$ of the restriction operator in the basis $$\{\lvert\mu\rvert^n\sin n\varphi, \lvert\mu\rvert^n\cos n\varphi, n\lvert \mu \rvert ^n\sin n\varphi, \\ n\lvert\mu\rvert^n\cos n\varphi,\ldots,n^s\lvert \mu \rvert^n\sin n\varphi, n^s\lvert \mu \rvert^n \cos n\varphi\}$$ has the form
$$\mathcal T_{\mu,C}^s = \mu \begin{pmatrix}J&J&J&\ldots&J\\0&J&2J&\ldots&{s \choose 1}J\\0&0&R&\ldots&{s \choose 2}J\\\ldots & \ldots & \ldots & \ldots & \ldots\\0&0&0&\ldots&J\end{pmatrix}$$
where $J=\begin{pmatrix}\cos\varphi&-\sin\varphi\\\sin\varphi&\cos\varphi\end{pmatrix}$.

\par 2. There exists a basis in $L_{\mu,C}^s$ such that the matrix $\mathcal T_{\mu}^s$ in that basis has the form 
$$\mathcal T_{\mu,C}^s = I_{s+1} \otimes J + U_{s+1} \otimes I_2$$
where $\otimes$ is the Kronecker matrix product.
\end{theorem}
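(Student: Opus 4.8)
The plan is to read this statement as the real--scalar counterpart of Theorem~\ref{geom} and to reduce everything to the complex case already treated there. First I would pin down $L_{\mu,C}^s$ itself. Complexifying, the $2(s+1)$ real sequences spanning $L_{\mu,C}^s$ recombine, via $n^k|\mu|^n\cos n\varphi=\re(n^k\mu^n)$ and $n^k|\mu|^n\sin n\varphi=\operatorname{Im}(n^k\mu^n)$, into a spanning set of $L_\mu^s+L_{\bar\mu}^s$; since $\operatorname{Im}\mu\ne0$ we have $\mu\ne\bar\mu$, so by the description of generalized eigenspaces in Statement~2 (equivalently, by the classical independence of polynomials times distinct geometric sequences, cf.\ \cite{G71}) the sum is direct, $L_\mu^s\oplus L_{\bar\mu}^s$, of complex dimension $2(s+1)$. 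Hence the original real family is $\mathbb R$-independent, $\dim_{\mathbb R}L_{\mu,C}^s=2(s+1)$, and $L_{\mu,C}^s$ is a genuine subspace of $S$.

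For Part~1 I would simply apply $T$ to a typical basis vector. With $y_n=n^k|\mu|^n\cos n\varphi$ (respectively $\sin$), the shift gives $(Ty)_n=(n+1)^k|\mu|^{n+1}\cos((n+1)\varphi)$; expanding $(n+1)^k=\sum_{j=0}^k\binom{k}{j}n^j$ by the binomial theorem and $\cos((n+1)\varphi)$, $\sin((n+1)\varphi)$ by the angle-addition formulas shows that the image is a combination of the pairs $\{\,n^j|\mu|^n\cos n\varphi,\ n^j|\mu|^n\sin n\varphi\,\}$ for $0\le j\le k$, the $j$-th pair entering with the $2\times2$ coefficient $\binom{k}{j}$ times a scaled rotation block. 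Collecting these coefficients is exactly the asserted block matrix (with $J$ the rotation by $\varphi$ and the scalar $|\mu|$ in front). In particular $T(L_{\mu,C}^s)\subseteq L_{\mu,C}^s$, and since the resulting matrix is block upper triangular with every diagonal block equal to $|\mu|J$, of determinant $|\mu|^2\ne0$, the restriction $T_{\mu,C}^s$ is invertible, hence an automorphism of $L_{\mu,C}^s$.

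For Part~2 I would construct the basis explicitly from Theorem~\ref{geom}(2). Let $f_0,\dots,f_s$ be the basis of $L_\mu^s$ in which $Tf_0=\mu f_0$ and $Tf_j=\mu f_j+f_{j-1}$ for $1\le j\le s$, and set $h_j=\operatorname{Im}f_j$, $g_j=\re f_j$. The direct-sum decomposition above shows that $h_0,g_0,\dots,h_s,g_s$ is a real basis of $L_{\mu,C}^s$. Since the shift is a real operator, $Tg_j=\re(\mu f_j)+g_{j-1}$ and $Th_j=\operatorname{Im}(\mu f_j)+h_{j-1}$; writing $\mu=|\mu|e^{i\varphi}$ and $f_j=g_j+ih_j$ gives $\re(\mu f_j)=|\mu|(\cos\varphi\,g_j-\sin\varphi\,h_j)$ and $\operatorname{Im}(\mu f_j)=|\mu|(\sin\varphi\,g_j+\cos\varphi\,h_j)$, so in the ordered basis $h_0,g_0,\dots,h_s,g_s$ the matrix of $T_{\mu,C}^s$ is $I_{s+1}\otimes(|\mu|J)+U_{s+1}\otimes I_2$; one diagonal rescaling by $\operatorname{diag}(1,|\mu|^{-1},\dots,|\mu|^{-s})\otimes I_2$ then brings it to $|\mu|(I_{s+1}\otimes J+U_{s+1}\otimes I_2)$, i.e.\ to the asserted form (up to the overall scalar $|\mu|$). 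Equivalently, one may observe that the Part~1 matrix is $P_{s+1}\otimes(|\mu|J)$ with $P_{s+1}$ the upper-triangular Pascal matrix, apply the similarity of Theorem~\ref{geom}(2) in the first Kronecker factor, and note that over $\mathbb C$ the whole operator has Jordan form $J_{s+1}(\mu)\oplus J_{s+1}(\bar\mu)$, a single real Jordan block --- which is precisely what $I_{s+1}\otimes J+U_{s+1}\otimes I_2$ realizes.

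I do not expect a conceptual obstacle: the substance is inherited from Theorem~\ref{geom} and the independence statements. The step that needs the most care is the complexification at the start --- using $\operatorname{Im}\mu\ne0$ together with Statement~2 to get the direct sum $L_\mu^s\oplus L_{\bar\mu}^s$, which is what simultaneously legitimizes the dimension count, the claim $L_{\mu,C}^s\subset S$, and the passage to the real basis $\{h_j,g_j\}$ --- and, secondarily, the bookkeeping needed to match the Kronecker ordering, the choice of $J$ versus $J^{\mathsf T}$, and the scalar $|\mu|$ to the precise shape displayed in the statement.
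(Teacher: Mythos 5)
Your proposal is correct, and it is worth recording how it differs from the paper. Part~1 is essentially the paper's own argument: apply $T$ to each basis sequence, expand $(n+1)^k$ binomially and $\sin((n+1)\varphi)$, $\cos((n+1)\varphi)$ by angle addition, and read off the block Pascal matrix with rotation blocks $|\mu|J$; your added remark that the block-triangular matrix has determinant $(|\mu|^2)^{s+1}\ne 0$ makes the automorphism claim explicit, which the paper leaves implicit. For Part~2, however, you take a genuinely different route. The paper identifies $\mathcal T_{\mu,C}^s$ as $|\mu|(P_{s+1}\otimes J)$, computes the spectrum via the Kronecker-product spectral rule, passes to the complex Jordan form $J_{s+1}(|\mu|e^{i\varphi})\oplus J_{s+1}(|\mu|e^{-i\varphi})$, and then invokes permutation similarity and the realification of conjugate Jordan-cell pairs (citing Horn--Johnson) to land on the real canonical form --- an existence argument that never exhibits the basis. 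You instead construct the basis directly: take the Jordan basis $f_0,\dots,f_s$ of $L_\mu^s$ from Theorem~\ref{geom}(2) and use $g_j=\re f_j$, $h_j=\operatorname{Im}f_j$, computing $Tg_j$, $Th_j$ from the reality of the shift operator. This is more elementary (no canonical-form machinery beyond Theorem~\ref{geom}) and produces the basis explicitly, which is what one actually needs when applying Theorem~\ref{resonance}. Two further points in your favor: your opening complexification $L_{\mu,C}^s\otimes\mathbb C=L_\mu^s\oplus L_{\bar\mu}^s$ supplies the linear-independence/dimension count that the paper nowhere verifies; and your diagonal-rescaling step cleanly resolves the $|\mu|$ bookkeeping, on which the paper's statement (no $|\mu|$ in the displayed form of Part~2) and its proof (which ends with $|\mu|I_{s+1}\otimes J+U_{s+1}\otimes I_2$) are inconsistent with each other.
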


\begin{proof}
\par 1. For the sequence $y_n=|\mu|^n\sin n\varphi$,  we just have \begin{gather*}(Ty)_n=\mu^{n+1}\sin(n+1)\varphi=|\mu|\cdot|\mu|^n(\cos\varphi\sin n\varphi + \sin\varphi\cos n\varphi)=\\=|\mu|\left(\cos \varphi |\mu|^n\sin n\varphi + \sin\varphi |\mu|^n\cos n\varphi\right),\end{gather*}
and, for the sequence $y_n=|\mu|^n\sin n\varphi$, we have \begin{gather*}(Ty)_n=\mu^{n+1}\cos(n+1)\varphi=|\mu|\cdot|\mu|^n(-\sin\varphi\sin n\varphi + \cos\varphi\cos n\varphi)=\\=|\mu|\left(-\sin \varphi |\mu|^n\sin n\varphi + \cos\varphi |\mu|^n\cos n\varphi\right).\end{gather*}

Let $y_n=n^k|\mu|^n\sin n\varphi$ for some $k$ such that $1 \leq k \leq s$. Then we have
\begin{gather*}y_{n+1}=(n+1)^k|\mu|^n\sin (n+1)\varphi = |\mu|\sum\limits_{j=0}^k {k \choose j} n^j |\mu|^n(\cos\varphi \sin n\varphi + \sin \varphi \cos n\varphi)=
\\=|\mu|\sum\limits_{j=0}^k \left[\left({n\choose k}\cos\varphi\right)n^j|\mu|^n\sin n\varphi + \left({n\choose k}\sin\varphi\right)n^j|\mu|^n\cos n\varphi\right].\end{gather*}

We get a similar result for $y_n=n^k|\mu|^n\cos n\varphi$ for some $k$ such that $1 \leq k \leq s$. Then we have
\begin{gather*}y_{n+1}=(n+1)^k|\mu|^n\cos (n+1)\varphi = |\mu|\sum\limits_{j=0}^k {k \choose j} n^j |\mu|^n(-\sin\varphi \sin n\varphi + \cos \varphi \cos n\varphi)=
\\=|\mu|\sum\limits_{j=0}^k \left[-\left({n\choose k}\sin\varphi\right)n^j|\mu|^n\sin n\varphi + \left({n\choose k}\cos\varphi\right)n^j|\mu|^n\cos n\varphi\right].\end{gather*}

\par 2. Note that in our case $\mathcal T_{\mu,C}^s = |\mu| (P_n \otimes J)$, where $P_n$ is an $n \times n$ upper triangular matrix of binomial coefficients. Using the proven result in Theorem \ref{geom} above, we conclude that the spectrum of the matrix $P_n$ consists of a single number $1$, and the spectrum of the matrix $J$ consists of two eigenvalues $e^{i\varphi}$ and $e^{-i\varphi}$. Then by the property of the Kronecker product of two matrices, the spectrum of the matrix $\mathcal T_{\mu,C}^s$ consists of two eigenvalues $|\mu|e^{i\varphi}$ and $|\mu|e^{-i\varphi}$. So, the Jordan canonical form for $\mathcal T_{\mu,C}^s$ has the form $\begin{pmatrix}J_k((|\mu|e^{i\varphi})&0\\0&J_k((|\mu|e^{-\varphi})\end{pmatrix}$, where $J_k(\lambda)$ is a $k$-th order Jordan cell that corresponds to the eigenvalue $\lambda$.

This Jordan matrix is permutation similar \cite{H88} to the block matrix
$$\begin{pmatrix}D(|\mu|e^{i\varphi})&I&O&\ldots&O\\O&D((|\mu|e^{i\varphi})&I&\ldots&O\\\ldots&\ldots&\ldots&\ldots&\ldots\\O&O&O&\ldots&I\\O&O&O&\ldots&D((|\mu|e^{i\varphi})\end{pmatrix},$$
where $D(|\mu|(e^{i\varphi})=\begin{pmatrix}(|\mu|e^{i\varphi}&0\\0&(|\mu|e^{-i\varphi}\end{pmatrix}.$

Also, each of the blocks $D((|\mu|e^{i\varphi})$ is similar to a real matrix
$$(|\mu|J=|\mu|\begin{pmatrix}\cos \varphi&-\sin\varphi\\\sin\varphi&\cos\varphi\end{pmatrix},$$
thus, the block pair of $k \times k$ conjugate Jordan cells with complex $\lambda=e^{i\varphi}$ is similar to a $2k\times2k$ matrix $ |\mu|I_{s+1} \otimes J + U_{s+1} \otimes I_2.$ 
\end{proof}

\subsection{Algebraic Theorems for Difference Equations}

Let $S_\mathbb R$ be a subspace of $S$ that consists only of real-valued sequences and is a vector space over a field of real numbers. We consider a higher-order nonhomogeneous linear ordinary difference equation (ODE) with constant coefficients

\begin{gather}\label{lnde}
y_{n+k}+a^{(1)}y_{n+k-1}+\ldots+a^{(k)}y_n=f_n
\end{gather}
where $y \in S_\mathbb R$ is a sought sequence, $a^{(1)},\ldots, a^{(k)}$ are given numbers with $a^{(k)} \ne 0$, and $f \in S_\mathbb R$ is a given sequence.

\begin{theorem} In \eqref{lnde}, let $\mu$ be a real number and $f \in L_\mu^s$
\par 1. If $\mu$ is not a characteristic number of equation \eqref{lnde}, then a particular solution $y^{(0)} \in L_\mu^s$, and its coefficients in the basis $\{\mu^n, n\mu^n,\ldots,n^s\mu^n\}$ can be found from the system of equations
$$R_\mu^s\*y = \*f,$$
where $R_\mu^s=(\mathcal T_\mu^s)^n+a_1(\mathcal T_\mu^s)^{n-1}+\ldots+a_nI$ and $\*f$ is a column-vector obtained from expansion of the right-hand side $f$ in the equation into the given basis, and the system in question has a unique solution.

\par 2. If $\mu$ is a characteristic number of equation \eqref{lnde} and is of multiplicity $m$, then the particular solution $y^{(0)} \in L_{\mu}^{m+s}=K_\mu^m \oplus L_\mu^{m,s}$, where
$$K_\mu^m=\Span\left\{\mu^n,n\mu^m\ldots,n^{m-1}\mu^n\right\} = \Ker R\rvert _{L_\mu^{m+s}}$$
and
$$L_\mu^{m,s}=\Span\left\{n^m\mu^n,n^{m+1}e^{\mu n},\ldots,n^{m+s}e^{\mu n}\right\}.$$

In this case we can find the projection $\*y_p$ of the particular solution $y^{(0)}$ onto $L_{\mu}^{m,s}$, and the coefficients of that projection in the given basis from the system of equations $$R_\mu^{m,s}\*y_p=\*f,$$
where $R_\mu^{m,s}$ is a matrix obtained by eliminating the first $m$ columns and the last $m$ columns from the matrix $(T_\mu^s)^n+a_1(T_\mu^s)^{n-1}+\ldots+a_nI$ and $\*f$ is a column-vector obtained by expanding the right-hand side $f \in L_\mu^s$ in the basis $\{\mu^n, n\mu^n,\ldots,n^s\mu^n\}$. The system in question has a unique solution.
\end{theorem}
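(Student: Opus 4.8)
The plan is to turn the whole statement into finite-dimensional linear algebra on the $T$-invariant spaces $L_\mu^s$ and $L_\mu^{m+s}$, exploiting the single-Jordan-cell description of $\mathcal T_\mu^s$ from Theorem \ref{geom}. Write $p(\lambda)=\lambda^k+a^{(1)}\lambda^{k-1}+\cdots+a^{(k)}$ for the characteristic polynomial of \eqref{lnde}, so that the difference operator on the left of \eqref{lnde} is $R=p(T)$. By part~1 of Theorem \ref{geom} the restriction $T|_{L_\mu^s}$ is an automorphism of $L_\mu^s$; hence $L_\mu^s$ is invariant under every polynomial in $T$, in particular under $R$, and in the basis $\{\mu^n,n\mu^n,\dots,n^s\mu^n\}$ the matrix of $R|_{L_\mu^s}$ equals $p(\mathcal T_\mu^s)$, which is the matrix $R_\mu^s$ of the statement. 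Since $\mathcal T_\mu^s$ is upper triangular with the single diagonal entry $\mu$, the matrix $p(\mathcal T_\mu^s)$ is upper triangular with all diagonal entries equal to $p(\mu)$. For part~1 this finishes things: if $\mu$ is not a characteristic number then $p(\mu)\neq0$, so $R_\mu^s$ is invertible and $R|_{L_\mu^s}$ is an automorphism; thus for the given $f\in L_\mu^s$ there is a unique $y^{(0)}\in L_\mu^s$ with $Ry^{(0)}=f$, and in coordinates this is exactly the uniquely solvable system $R_\mu^s\mathbf y=\mathbf f$.

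For part~2, factor $p(\lambda)=(\lambda-\mu)^m q(\lambda)$ with $q(\mu)\neq0$, and work in $L_\mu^{m+s}$, of dimension $m+s+1$. The asserted splitting $L_\mu^{m+s}=K_\mu^m\oplus L_\mu^{m,s}$ is immediate since the two spanning families are complementary parts of the monomial basis $\{\mu^n,\dots,n^{m+s}\mu^n\}$. Set $N:=(T-\mu I)|_{L_\mu^{m+s}}$; by part~2 of Theorem \ref{geom} this is a single nilpotent Jordan block, and a one-line computation (as in the proof of Theorem \ref{geom}: $(T-\mu I)(n^j\mu^n)=\mu\sum_{i=0}^{j-1}\binom{j}{i}n^i\mu^n$) shows that $N$ sends $L_\mu^j$ onto $L_\mu^{j-1}$ for $1\le j\le m+s$ and annihilates $L_\mu^0$. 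Because $p(T)=(T-\mu I)^m q(T)$ as polynomials in $T$, on $L_\mu^{m+s}$ we get $R|_{L_\mu^{m+s}}=N^m\circ\bigl(q(T)|_{L_\mu^{m+s}}\bigr)$, and $q(T)|_{L_\mu^{m+s}}=q(\mu)I+(\text{nilpotent})$ is an automorphism because $q(\mu)\neq0$. Hence the range of $R|_{L_\mu^{m+s}}$ equals $N^m(L_\mu^{m+s})=L_\mu^s$, while $\dim\Ker\bigl(R|_{L_\mu^{m+s}}\bigr)=\dim\Ker N^m=m$; since $R(n^j\mu^n)=N^m\bigl(q(T)(n^j\mu^n)\bigr)=0$ for $0\le j\le m-1$ (the inner vector lies in $L_\mu^j$ with $j<m$, and $N^m$ annihilates $L_\mu^j$ for $j<m$), we conclude $\Ker\bigl(R|_{L_\mu^{m+s}}\bigr)=K_\mu^m$, i.e. the kernel is exactly the space of homogeneous solutions sitting inside $L_\mu^{m+s}$.

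Now $f\in L_\mu^s$ lies in the range of $R|_{L_\mu^{m+s}}$, so there exists $y^{(0)}\in L_\mu^{m+s}$ with $Ry^{(0)}=f$, unique up to an element of $K_\mu^m$; therefore its component $y_p\in L_\mu^{m,s}$ in the above direct sum is well defined and satisfies $Ry_p=f$ as well (two solutions differ by a kernel element). It remains to read off the matrix form. In the monomial basis of $L_\mu^{m+s}$ the matrix of $R|_{L_\mu^{m+s}}$ has its first $m$ columns zero (these correspond to $K_\mu^m=\Ker R$) and its last $m$ rows zero (the range lies in $L_\mu^s$); deleting them leaves the $(s+1)\times(s+1)$ matrix $R_\mu^{m,s}$, which represents the linear map $L_\mu^{m,s}\to L_\mu^s$, $y_p\mapsto Ry_p$, in the bases $\{n^m\mu^n,\dots,n^{m+s}\mu^n\}$ and $\{\mu^n,\dots,n^s\mu^n\}$. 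Thus $Ry_p=f$ becomes $R_\mu^{m,s}\mathbf y_p=\mathbf f$. Finally $R_\mu^{m,s}$ is triangular with diagonal entries the leading coefficients of $N^m q(T)(n^{m+\ell}\mu^n)$, namely $q(\mu)\,\mu^m\,(m+\ell)!/\ell!\neq0$ for $\ell=0,\dots,s$; hence $R_\mu^{m,s}$ is invertible, so the system has a unique solution, which is $\mathbf y_p$.

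The crux is the middle step of part~2: proving that the degeneracy of $R$ on $L_\mu^{m+s}$ is exactly that of $N^m$, i.e. that $\Ker\bigl(R|_{L_\mu^{m+s}}\bigr)=K_\mu^m$ and that the range of $R|_{L_\mu^{m+s}}$ equals $L_\mu^s$. This is where both the single-cell Jordan structure of Theorem \ref{geom} and the precise behaviour of $T-\mu I$ on the degree filtration $L_\mu^0\subset L_\mu^1\subset\cdots\subset L_\mu^{m+s}$ are needed; everything else — the identification of $R_\mu^s$ and $R_\mu^{m,s}$ with the corresponding matrices, the triangularity arguments, the consistency of $R_\mu^{m,s}\mathbf y_p=\mathbf f$, and all of part~1 — is routine bookkeeping with triangular matrices and indices.
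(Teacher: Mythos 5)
Your proof is correct and follows essentially the same route as the paper's: isolate the factor $(T-\mu I)^m$ of $R=p(T)$, use the single-Jordan-cell structure of $T|_{L_\mu^{m+s}}$ to see that this factor is nilpotent of the right shape while the cofactor $q(T)$ is an automorphism, and conclude via a dimension count that the reduced $(s+1)\times(s+1)$ system is nondegenerate. The only differences are cosmetic: you work in the monomial basis and additionally exhibit the nonzero diagonal entries $q(\mu)\mu^m(m+\ell)!/\ell!$ explicitly, whereas the paper passes to the Jordan basis and settles nondegeneracy of the corner block purely by the rank--nullity theorem.
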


\begin{proof}
For simplicity, we will prove the theorem in the Jordan basis of the operator $T_\mu^s$ in which its matrix is given by the formula $\mathcal T_\mu^s = \mu I_{s+1} + U_{s+1}$.

\par The difference operator $R$ on the left-hand side of equation \eqref{lnde} can be factorized as
$$R=\left(T-\mu_1 I\right)^{s_1}\left(T-\mu_2 I\right)^{s_2}\ldots\left(T-\mu_l I\right)^{s_l}$$
where $\mu_1,\ldots,\mu_l$ are the characteristic numbers of the homogeneous equation and $s_1,\ldots,s_l$ are their multiplicities, respectively.

\par 1. If neither of $\mu_1,\ldots,\mu_l$ equals $\mu$, then each of the operators $\left(T-\mu_k I\right)^{s_k}\rvert _{L_{\mu}^s}$ has the operator matrix
$$(T_\mu^s-\mu_k I)^{s_k}=\left[(\mu - \mu_k)I_{s+1}+ \begin{pmatrix}\*0_s & I_s\\0&\*0_s^T\end{pmatrix}\right]^{s_k}.$$

The determinant of such a matrix is not zero for any of $\mu_1,\ldots,\mu_l$, therefore $\Det R_\mu^s \ne 0$. Thus, the system $R_\mu^s \*y = \*f$ has a unique solution.

\par 2. Now suppose that there is $\mu$ among $\mu_1,\ldots,\mu_l$. Without loss of generality, we may assume that $\mu_1=\mu$ and $s_1=m$.

Then the operator $\left(T-\mu\right)^{s_1}\rvert _{L_\mu^{s_1+s}}$ has the operator matrix
$$(T_\mu^{s_1+s}-\mu I)^{s_1}=U_s^{s_1}=\begin{pmatrix}O_{(s+1) \times s_1} & I_{s+1}\\O_{s_1\times s_1}&O_{s_1 \times (s+1)}\end{pmatrix}.$$

Then, since all of the matrices $\left(T_\mu^{s_1+s}-\mu_k I\right)^{s_k}$, $k=2,\ldots,n$ are upper triangular and $(T_\mu^{s_1+s}-\mu I)^{s_1}$ is strictly upper triangular, the product $$\left(T_\mu^{s_1+s}-\mu_2 I\right)^{s_2}\left(T_\mu^{s_1+s}-\mu_3 I\right)^{s_3}\ldots\left(T_\mu^{s_1+s}-\mu_l I\right)^{s_l}$$ is strictly upper triangular. Moreover, it has a form $$\begin{pmatrix}O_{(s+1)\times s_1}& F_{(s+1)\times(s+1)}\\O_{s_1 \times s_1} & O_{s_1 \times (s+1)}\end{pmatrix}$$ for some $(s+1)\times (s+1)$ matrix $F$.Now we show that the matrix $F$ is non-degenerate. Note that the subspace $K_\mu^{s_1+s}$ has dimension of $s_1$ and is the kernel of $R\rvert _{L_\mu^{s_1+s}}$. By the rank-nullity theorem we have $$\Rank R\rvert _{L_{\mu}^{s_1+s}}=\Rank F = \Dim L_{\mu}^{s_1+s}-\Dim \Ker R\rvert _{L_{\mu}^{s_1+s}}=(s_1+s+1)-s_1=s+1.$$

Thus, $F$ is indeed non-degenerate, and the system of equations $F\*y_p = \*f$ has a unique solution $\*y_p \in \mathbb R^{s+1} \cong L_{\mu}^{s_1,s}$.
\end{proof}

\begin{theorem}\label{resonance} In \eqref{lnde}, let $f \in L_{\mu,C}^{s}$.
\par 1. If $\mu$ is not a characteristic number of equation \eqref{lnde}, then a particular solution $y^{(0)} \in L_\mu^{C,s}$, and its coefficients in the basis $$\left\{\lvert \mu \rvert ^n\sin n\varphi, \lvert \mu \rvert ^n\cos\varphi, n\lvert \mu \rvert ^n\sin\varphi,n\lvert \mu \rvert ^m\cos n\varphi,\ldots,n^s\lvert \mu \rvert ^n \sin n\varphi, n^s\lvert \mu \rvert ^n\cos n\varphi\right\}$$ can be found from the system of equations
$$R_{\mu,C}^s\*y = \*f,$$
where $R_{\mu,C}^s=(T_{\mu,C}^s)^n+a_1(T_{\mu,C}^s)^{n-1}+\ldots+a_nI$ and $\*f$ is a column-vector obtained from expansion of the right-hand side $f$ in the equation into the given basis, and the system in question has a unique solution.

\par 2. If $\mu$ is a characteristic number of equation \eqref{lnde} and is of multiplicity $m$, then the particular solution $y^{(0)} \in L_{\mu,C}^{m+s}=K_{\mu,C}^m \oplus L_{\mu,C}^{m,s}$, where
\begin{gather*}K_{\mu,C}^m=\Span\left\{\lvert \mu \rvert ^n\sin n\varphi, \lvert \mu \rvert ^n\cos\varphi, n\lvert \mu \rvert ^n\sin\varphi,n\lvert \mu \rvert ^m\cos n\varphi,\ldots,\right.\\\left.\ldots, n^{m-1}\lvert \mu \rvert ^n \sin n\varphi, n^{m-1}\lvert \mu \rvert ^n\cos n\varphi\right\} = \Ker R\rvert _{L_{\mu,C}^{m+s}}
\end{gather*}
and
\begin{gather*}
L_\mu^{m,s}=\Span\left\{n^m\lvert \mu \rvert ^n\sin n\varphi, n^m\lvert \mu \rvert ^n\cos\varphi, n^{m+1}\lvert \mu \rvert ^n\sin\varphi,n^{m+1}\lvert \mu \rvert ^m\cos n\varphi,\ldots,\right.\\\left.\ldots, n^{m+s}\lvert \mu \rvert ^n \sin n\varphi, n^{m+s}\lvert \mu \rvert ^n\cos n\varphi\right\}.
\end{gather*}

In this case we can find the projection $\*y_p$ of the particular solution $y^{(0)}$ onto $L_{\mu,C}^{m,s}$, and the coefficients of that projection in the given basis from the system of equations $$R_{\mu,C}^{m,s}\*y_p=\*f,$$
where $R_{\mu,C}^{m,s}$ is a matrix obtained by eliminating the first $m$ columns and the last $m$ columns from the matrix $(T_{\mu,C}^s)^n+a_1(T_{\mu,C}^s)^{n-1}+\ldots+a_nI$ and $\*f$ is a column-vector obtained by expanding the right-hand side $f \in L_{\mu,C}^s$ in the basis $$\left\{\lvert \mu \rvert ^n\sin n\varphi, \lvert \mu \rvert ^n\cos\varphi, n\lvert \mu \rvert ^n\sin\varphi,n\lvert \mu \rvert ^m\cos n\varphi,\ldots,n^s\lvert \mu \rvert ^n \sin n\varphi, n^s\lvert \mu \rvert ^n\cos n\varphi\right\}$$ The system in question has a unique solution.
\end{theorem}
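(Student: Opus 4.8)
The plan is to follow closely the architecture of the proof of the preceding theorem (the real-$\mu$ case), the one genuinely new ingredient being the bookkeeping forced by the fact that a non-real $\mu$ never enters the factorisation alone: since \eqref{lnde} has real coefficients, $\bar\mu$ is a characteristic number precisely when $\mu$ is, and with the same multiplicity $m$. Accordingly I would first rewrite the factorisation of $R$ over the reals, isolating the factor attached to the conjugate pair $\{\mu,\bar\mu\}$:
\[R=\bigl(T^{2}-2(\re\mu)\,T+|\mu|^{2}I\bigr)^{m}\,\tilde q(T),\]
where $m=0$ in part~1, and $\tilde q$ is a real polynomial with $\tilde q(\mu)\ne 0$ and $\tilde q(\bar\mu)\ne 0$ (its roots are the remaining characteristic numbers, grouped into conjugate pairs and real linear factors). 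Throughout I work on the $T$-invariant subspace $L_{\mu,C}^{s}$ in part~1 and $L_{\mu,C}^{m+s}$ in part~2, using the normal form established in the complex analogue of Theorem~\ref{geom}: in a suitable basis $\mathcal T_{\mu,C}^{\,\cdot}=I\otimes(|\mu|J)+U\otimes I_{2}$, so its only eigenvalues are $\mu$ and $\bar\mu$, each carried by a single realified Jordan cell, and its complexification is $L_\mu^{\,j}\oplus L_{\bar\mu}^{\,j}$ with $T$ a single Jordan cell on each summand by Theorem~\ref{geom}.

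For part~1 the argument is short. The matrix $R_{\mu,C}^{s}$ equals $p(\mathcal T_{\mu,C}^{s})$, where $p(\lambda)=\lambda^{k}+a^{(1)}\lambda^{k-1}+\dots+a^{(k)}$ is the characteristic polynomial of \eqref{lnde}; since $\operatorname{spec}\mathcal T_{\mu,C}^{s}=\{\mu,\bar\mu\}$ and $\mu$, hence $\bar\mu$, is not a characteristic number, the eigenvalues of $R_{\mu,C}^{s}$ are $p(\mu)$ and $p(\bar\mu)=\overline{p(\mu)}$, both non-zero, so $\Det R_{\mu,C}^{s}\ne 0$ (equivalently, each real linear or quadratic factor coming from the factorisation above restricts to an invertible operator on $L_{\mu,C}^{s}$, exactly as in the analogous step of the previous proof). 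Hence $R_{\mu,C}^{s}\mathbf y=\mathbf f$ has a unique solution, and because $L_{\mu,C}^{s}$ is $T$-invariant, $R$ agrees with its restriction there, so the sequence $y^{(0)}\in L_{\mu,C}^{s}$ with coordinate vector $\mathbf y$ satisfies $Ry^{(0)}=f$.

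For part~2 I argue on $L_{\mu,C}^{m+s}=K_{\mu,C}^{m}\oplus L_{\mu,C}^{m,s}$ in the natural basis ordered by degree. First, $R|_{L_{\mu,C}^{m+s}}$ maps into $L_{\mu,C}^{s}$: the factor $\tilde q(T)$ is invertible on each $L_{\mu,C}^{\,j}$ (its eigenvalues there are $\tilde q(\mu),\tilde q(\bar\mu)$) and preserves degree, while $T^{2}-2(\re\mu)T+|\mu|^{2}I$ lowers the polynomial degree by exactly one — on $L_\mu^{\,j}$ the factor $T-\bar\mu$ is invertible and $T-\mu$ is a single nilpotent Jordan cell, and symmetrically on $L_{\bar\mu}^{\,j}$ — so its $m$-th power sends $L_{\mu,C}^{m+s}$ into $L_{\mu,C}^{s}$. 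Second, $\Ker\bigl(R|_{L_{\mu,C}^{m+s}}\bigr)=K_{\mu,C}^{m}$: cancelling the invertible $\tilde q(T)$, the equation $Ry=0$ reduces to $(T^{2}-2(\re\mu)T+|\mu|^{2}I)^{m}y=0$, and passing to the complexification the single-Jordan-cell description shows that $(T-\mu)^{m}$ annihilates exactly the degree $\le m-1$ part of $L_\mu^{m+s}$ (and likewise $(T-\bar\mu)^{m}$ on $L_{\bar\mu}^{m+s}$), whose realification is $K_{\mu,C}^{m}$, of dimension $2m$; the reverse inclusion $K_{\mu,C}^{m}\subseteq\Ker$ is clear since the $m$-th power of the quadratic factor kills all degrees $<m$. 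By rank--nullity $\Rank\bigl(R|_{L_{\mu,C}^{m+s}}\bigr)=2(m+s+1)-2m=2(s+1)=\Dim L_{\mu,C}^{s}$, so $R|_{L_{\mu,C}^{m+s}}$ is onto $L_{\mu,C}^{s}$ and a particular solution $y^{(0)}\in L_{\mu,C}^{m+s}$ exists, unique modulo $K_{\mu,C}^{m}$. Consequently the matrix of $R|_{L_{\mu,C}^{m+s}}$ in the natural basis has its first $2m$ columns zero (they span $K_{\mu,C}^{m}$) and its last $2m$ rows zero (the image lies in $L_{\mu,C}^{s}$), leaving a $2(s+1)\times 2(s+1)$ matrix $R_{\mu,C}^{m,s}=F$; since the non-zero content of the matrix is exactly $F$, $\Rank F=2(s+1)$, so $F$ is non-degenerate, $R_{\mu,C}^{m,s}\mathbf y_{p}=\mathbf f$ has a unique solution, and it is the coordinate vector of the projection of $y^{(0)}$ onto $L_{\mu,C}^{m,s}$ (well defined because $L_{\mu,C}^{m,s}\cap K_{\mu,C}^{m}=\{0\}$).

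The step I expect to be the crux is verifying that the degree drop produced by $(T^{2}-2(\re\mu)T+|\mu|^{2}I)^{m}$ is exactly $m$, i.e.\ that this operator is not more degenerate than claimed; in matrix terms this amounts to the invertibility of the $2\times2$ block sitting on the $m$-th block-superdiagonal of its matrix, which works out to a non-zero scalar multiple of $2\bigl(|\mu|J-(\re\mu)I_{2}\bigr)$, a matrix of determinant $4\,(\operatorname{Im}\mu)^{2}$. This is the only place the hypothesis that $\mu$ has non-zero imaginary part is used, and it is precisely what keeps the rank count from collapsing and the system from becoming underdetermined — it plays the role the explicit block $U_{s}^{s_{1}}$ played in the previous proof. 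One can make the whole matrix picture explicit via the identity $\mathcal T_{\mu,C}^{m+s}=I\otimes(|\mu|J)+U\otimes I_{2}$, which gives $T^{2}-2(\re\mu)T+|\mu|^{2}I=U\otimes C'+U^{2}\otimes I_{2}$ with $C'=2\bigl(|\mu|J-(\re\mu)I_{2}\bigr)$, hence its $m$-th power equals $U^{m}\otimes (C')^{m}$ plus higher powers of $U$. A minor point to keep honest is the compatibility of the two descriptions of $K_{\mu,C}^{m}$, $L_{\mu,C}^{m,s}$, $L_{\mu,C}^{s}$ — as spans of consecutive natural basis vectors versus their counterparts in the Jordan-type basis — which holds because the change of basis is block-upper-triangular within the generalized eigenspace and so preserves the leading coordinate subspaces.
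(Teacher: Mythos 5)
Your proposal is correct and follows essentially the same route as the paper: pass to the basis where $\mathcal T_{\mu,C}^{\cdot}=I\otimes(|\mu|J)+U\otimes I_2$, observe in part 1 that every factor of $R$ restricts to an invertible operator, and in part 2 compute that the real quadratic factor attached to the pair $\{\mu,\bar\mu\}$ equals $U\otimes B+U^2\otimes I_2$ with $B$ invertible (your $C'$ is the paper's $B$), so its $m$-th power is strictly block upper triangular with an invertible block on the $m$-th block superdiagonal, and conclude via rank--nullity that the reduced square matrix is non-degenerate. Your write-up is in fact somewhat more careful than the paper's (explicit kernel identification, the determinant $4(\operatorname{Im}\mu)^2$ pinpointing where $\operatorname{Im}\mu\ne0$ is used), but the underlying argument is the same.
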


\begin{proof}

As in the previous case, for simplicity, we will prove the theorem in the Jordan basis of the operator $T_\mu^s$ in which its matrix is given by the formula $\mathcal T_{\mu,C}^s = I_{s+1} \otimes J + U_{s+1} \otimes I_2$, where $\otimes$ is the Kronecker matrix product.

\par The difference operator $R$ on the left-hand side of equation \eqref{lnde} can be factorized as
$$R=\left(T-\mu_1 I\right)^{s_1}\left(T-\mu_2 I\right)^{s_2}\ldots\left(T-\mu_l I\right)^{s_l}$$
where $\mu_1,\ldots,\mu_l$ are the characteristic numbers of the homogeneous equation and $s_1,\ldots,s_l$ are their multiplicities, respectively.

\par 1. If neither of $\mu_1,\ldots,\mu_l$ equals $\mu$, then each of the operators $\left(T-\mu_k I\right)^{s_k}\rvert _{L_{\mu,C}^s}$ has the operator matrix
$$(T_{\mu,C}^s-\mu_k I)^{s_k}=\left[I_{s+1} \otimes (J-\mu_k I_2)+U_{2(s+1)}\right]^{s_k}.$$
The determinant of such a matrix is not zero for any of $\mu_1,\ldots,\mu_l$, therefore $\Det R_{\mu,C}^s \ne 0$. Thus, the system $R_{\mu,C}^s \*y = \*f$ has a unique solution.

\par 2. Now suppose that there is $\mu$ among $\mu_1,\ldots,\mu_l$. Recall that since all the coefficients $a_1,\ldots,a_n$ in equation \eqref{lnde} are real, then complex roots always appear as a pair of complex conjugate numbers. Without loss of generality, we may assume that $\mu=\mu_1$ and $m=s_1$. We first consider the matrix
\begin{gather*}(T_{\mu,C}^{s_1+s}-\mu I)(T_{\mu,C}^{s_1+s}-\bar{\mu}I)=\\=\left[I_{s_1+s+1} \otimes \beta\begin{pmatrix}-\*i&1\\-1&-\*i\end{pmatrix}+ U_{s_1+s+1}\otimes I_2\right]\left[I_{s+1} \otimes \beta\begin{pmatrix}\*i&1\\-1&\*i\end{pmatrix})+ U_{s+1}\otimes I_2\right]=\\
=\beta^2(I_{s_1+s+1} \otimes O_2)+\beta\left[U_{s_1+s+1}\otimes\begin{pmatrix}0&2\\-2&0\end{pmatrix}\right]+U_{s_1+s+1}^2\otimes I_2=\\
=U_{s_1+s+1}\otimes B+U_{s_1+s+1}^2\otimes I_2,
%=\begin{pmatrix}O_{(2s-2)\times 4}&I_{2s-2}\\O_{4\times 4}&O_{4\times(2s-2)}\end{pmatrix}.
\end{gather*}
where $B=\begin{pmatrix}0&2\beta\\-2\beta&0\end{pmatrix}$.

Note that matrices $U_{s_1+s+1}\otimes B$ and $U_{s_1+s+1}^2 \otimes I_2$ commute. Therefore, we have

\begin{gather*}
\left[(T_{\mu,C}^{s_1+s}-\mu I)(T_{\mu,C}^{s_1+s}-\bar{\mu}I)\right]^{s_1}=\left(U_{s_1+s+1} \otimes B + U_{s_1+s+1}^2 \otimes I_2\right)^{s_1}=\\
=\sum\limits_{k=0}^{s_1} {s_1 \choose k} (U_{s_1+s+1}\otimes B)^{k}(U_{s_1+s+1}^2 \otimes I_2)^{s_1-k}=\sum\limits_{k=0}^{s_1}{s_1 \choose k}(U_{s_1+s+1}^{s_1+k} \otimes B^{k})= \\ = \begin{pmatrix}O_{2(s+1) \times 2s_1} & C_{2(s+1) \times 2(s+1)}\\O_{2s_1 \times 2s_1}&O_{2s_1\times 2(s+1)}\end{pmatrix}
\end{gather*}
where $C$ is a $2(s+1) \times 2(s$ matrix.

%Then, since the product $\left(T_\mu^{s_1+s}-\mu_2 I\right)^{s_2}\left(T_\mu^{s_1+s}-\mu_3 I\right)^{s_3}\ldots\left(T_\mu^{s_1+s}-\mu_l I\right)^{s_l}$ has a similar form $$\begin{pmatrix}O_{2s \times 2} & \tilde C_{2s \times 2s}\\O_{2 \times 2}&O_{2 \times 2s}\end{pmatrix}$$ for some matrix $\tilde C$.

Since all of the matrices $\left(T_\mu^{s_1+s}-\mu_k I\right)^{s_k}$, $k=2,\ldots,n$ are upper triangular and the product $\left[(T_{\mu,C}^s-\mu I)(T_{\mu,C}^s-\bar{\mu}I)\right]^{s_1}$ is strictly upper triangular, the product $$\left(T_\mu^{s_1+s}-\mu_2 I\right)^{s_2}\left(T_\mu^{s_1+s}-\mu_3 I\right)^{s_3}\ldots\left(T_\mu^{s_1+s}-\mu_l I\right)^{s_l}$$ is strictly upper triangular. Moreover, it has a form $$\begin{pmatrix}O_{2(s+1) \times 2s_1} & C_{2(s+1) \times 2(s+1)}\\O_{2s_1 \times 2s_1}&O_{2s_1\times 2(s+1)}\end{pmatrix}$$ for some $2(s+1)\times 2(s+1)$ matrix $\tilde C$.

It follows from the rank-nullity theorem that $\tilde C$ is non-degenerate.

\end{proof}

\section{Demonstration of the Approach}

In this section we are going to compare the proposed method and the well-known method of undetermined coefficients applied to two difference equations, each of which has a complex quasi-polynomial on the right-hand side.

\begin{example}
Find a particular solution of the equation $y_{n+2}+y_n=\cos \frac{\pi n}{8}$.
\end{example}

At first, we are going to solve this exercise using the proposed method. We see that there is no resonance in this equation as the characteristic numbers of the homogeneous equation are $\pm \mathbf i$, where $\mathbf i$ is the imaginary unit, and the right-hand side is a quasi-polynomial that corresponds to a pair of conjugate numbers $e^{\pm \mathbf i \frac{\pi}{8}}$.

The matrix of the operator $T\rvert_{L_{1,\frac{\pi}{8}}^1}$ has the form $\mathcal T_{1,\frac{\pi}{8}}^1=\begin{pmatrix}\cos\frac{\pi}{8}&-\sin\frac{\pi}{8}\\\sin\frac{\pi}{8}&\cos\frac{\pi}{8}\end{pmatrix}$.

The matrix of the operator $R_{{1,\frac{\pi}{8}}^1}$ has the form
$$(T_{1,\frac{\pi}{8}}^1)^2+I=\begin{pmatrix}\frac{1}{\sqrt2}+1&-\frac{1}{\sqrt 2}\\\frac{1}{\sqrt 2}&\frac{1}{\sqrt 2}+1\end{pmatrix}.$$

Now, using elementary transformations, we get
\begin{gather*}\begin{pmatrix}[cc|c]\frac{\sqrt 2 + 1}{\sqrt 2}&-\frac{1}{\sqrt 2}&0\\[1mm]\frac{1}{\sqrt 2}&\frac{\sqrt 2 +1}{\sqrt 2}&1\end{pmatrix}\sim\begin{pmatrix}[cc|c]1&\sqrt 2 +1&\sqrt 2\\[1mm]\sqrt 2 +1&-1&0\end{pmatrix}\sim\\[1mm]\sim\begin{pmatrix}[cc|c]1&\sqrt 2 +1 & \sqrt 2\\[1mm]0 & -4-2\sqrt 2&-2-\sqrt 2\end{pmatrix}\sim\begin{pmatrix}[cc|c]1&\sqrt 2 +1 & \sqrt 2\\[1mm]0&1&\frac{1}{2}\end{pmatrix}\sim\begin{pmatrix}[cc|c]1&0&\frac{\sqrt 2-1}{2}\\[1mm]0&1&\frac{1}{2}\end{pmatrix}.\end{gather*}

Therefore the sought particular solution is $y_n=\frac{\sqrt 2-1}{2}\sin\frac{\pi n}{8}+\frac{1}{2}\cos\frac{\pi n}{8}.$

How would we solve the same equation using the method of undetermined coefficients? Of course, we would seek the particular solution in the form $y_n=A\sin\frac{\pi n}{8}+B\cos\frac{\pi n}{8}$ and then substitute $y_n$ into the equation. We would get
$$A\sin\frac{\pi(n+2)}{8}+B\cos\frac{\pi(n+2)}{8}+A\sin\frac{\pi n}{8}+B\cos\frac{\pi n}{8}=\cos\frac{\pi n}{8}.$$

Eventually, we would get the same system of linear equations, but it would take more time and students would be more prone to making an arithmetic error, which would void their entire solution.

Now let us compare our approaches in a resonant case.

\begin{example}
Find a particular solution of the equation $y_{n+2}+y_n=\sin \frac{\pi n}{2}$.
\end{example}

We see that there is a resonance on the right-hand side $f \in L_{1,\frac{\pi}{2}}^{1+1}$. The matrix of the operator $T\rvert_{ L_{1,\frac{\pi}{2}}^{1+1}}$ has the form
$$ \mathcal T_{1,\frac{\pi}{2}}^{1+1}=\begin{pmatrix}0&-1&0&-1\\1&0&1&0\\0&0&0&-1\\0&0&1&0\end{pmatrix}.$$

The matrix of the operator $R\rvert_{ L_{1,\frac{\pi}{2}}^{1+1}}$ assumes the form$$ R_{1,\frac{\pi}{2}}^{1+1}=\begin{pmatrix}0&0&-2&0\\0&0&0&-2\\0&0&0&0\\0&0&0&0\end{pmatrix}.$$

Now, using elementary transformations, we get

\begin{gather*}\begin{pmatrix}[cc|c]-2&0&1\\[1mm]0&-2&0\end{pmatrix}\sim\begin{pmatrix}[cc|c]1&0&-\frac{1}{2}\\[1mm]0&1&0\end{pmatrix}.\end{gather*}

Thus, the sought particular solution is $y_n=-\frac{1}{2}n\sin\frac{\pi n}{2}$.

Using the method of undetermined coefficients, we would seek the particular solution in the form $y_n=An\sin\frac{\pi n}{8}+Bn\cos\frac{\pi n}{8}$ and then substitute $y_n$ into the equation.

We would get
$$A(n+2)\sin\frac{\pi(n+2)}{2}+B(n+2)\cos\frac{\pi(n+2)}{2}+An\sin\frac{\pi n}{2}+Bn\cos\frac{\pi n}{2}=\cos\frac{\pi n}{2}.$$

Eventually, we would get the same system of linear equations but it would take even more time to solve than the previous equation, and the probability of making an arithmetic error would be far higher than in the previous equation.

\section{The superposition principle}

The proposed modification can be extended to the problems that are solved by the superposition principle in the classical method of undetermined coefficients. If we use the algebraic modification presented in this paper we no longer need to divide the problem into several smaller ones.

\begin{theorem}[The superposition principle]
In equation \ref{lnde},  let the right-hand side $f \in \mathcal L\{L_{\mu_1}^{s_1},\ldots,L_{\mu_k}^{s_k},L_{\mu_{k+1},C}^{s_{k+1}},\ldots,L_{\mu_l,C}^{s^l}\}$, where $k$ and $l$ are integers, $\mu_1,\ldots,\mu_k$ are reals, and $\mu_{k+1},\ldots,\mu_l$ are complex numbers.

Then, in order to find the solution of equation \ref{lnde}, we need to find the matrix of the operator $T$ in the basis of the space $\mathcal L\{L_{\mu_1}^{s_1},\ldots,L_{\mu_k}^{s_k},L_{\mu_{k+1},C}^{s_{k+1}},\ldots,L_{\mu_l,C}^{s^l}\}$, which has the form
$$\mathcal T = \begin{pmatrix}T_{\mu_1}^{s_1}&O&\ldots&\ldots&\ldots&O\\
O&\ddots&\ldots&\vdots&\ldots&\vdots\\
\vdots&\ldots&T_{\mu_k}^{s_k}&O&\ldots&\vdots\\
\vdots&\ldots&O&T_{\mu_{k+1},C}^{s_{k+1}}&\ldots&\vdots\\
\vdots&\ldots&\ldots&\ldots&\ddots&\ldots\\
O&\ldots&\ldots&\ldots&\ldots&T_{\mu_l,C}^{s_l}\end{pmatrix},$$
construct the matrix $\mathcal R=\mathcal T^n+a^{(1)}\mathcal T^{n-1}+\ldots+a^{(k)}I$, then, crossing out zero columns and rows from the matrix $\mathcal R$, get the nondegenerate matrix $\mathcal R_{nd}$, and, by solving the equation $\mathcal R_{nd}\mathbf y = f$, where $f $ is a column vector, obtained by decomposition of the right-hand side in the basis of the space  $ \mathcal L\{L_{\mu_1}^{s_1},\ldots,L_{\mu_k}^{s_k},L_{\mu_{k+1},C}^{s_{k+1}},\ldots,L_{\mu_l,C}^{s^l}\}$, find the sought particular solution.
\end{theorem}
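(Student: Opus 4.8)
The plan is to deduce this theorem from the two solvability theorems already proved for a single real space $L_\mu^s$ and a single complex space $L_{\mu,C}^s$, using only the elementary fact that a direct-sum decomposition preserved by an operator produces a block-diagonal matrix on which polynomials act block by block.

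First I would fix notation and check the structural facts. Write $\mathcal L$ for the direct sum $L_{\mu_1}^{s_1}\oplus\cdots\oplus L_{\mu_k}^{s_k}\oplus L_{\mu_{k+1},C}^{s_{k+1}}\oplus\cdots\oplus L_{\mu_l,C}^{s_l}$ appearing in the statement, with the tacit understanding (made precise below) that a summand $L_{\mu_i}^{s_i}$ attached to a resonant $\mu_i$ of multiplicity $m_i$ is first enlarged to $L_{\mu_i}^{m_i+s_i}$; after merging coincident summands we may also assume $\mu_1,\ldots,\mu_l$ pairwise distinct, those among $\mu_{k+1},\ldots,\mu_l$ distinct up to conjugation. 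Since quasipolynomial sequences $n^j\mu^n$ — together with their real sine and cosine counterparts in the complex case — attached to distinct bases $\mu$ are linearly independent, the concatenation of the given bases of the summands is a basis of $\mathcal L$; and each summand is $T$-invariant, by part~1 of Theorem~\ref{geom} in the real case and by part~1 of its complex analogue in the complex case. Hence $\mathcal L$ is $T$-invariant, and in the concatenated basis the matrix of $T|_{\mathcal L}$ is block diagonal with diagonal blocks $\mathcal T_{\mu_i}^{s_i}$ and $\mathcal T_{\mu_j,C}^{s_j}$, i.e.\ it is the matrix $\mathcal T$ of the statement. Because a polynomial of a block-diagonal matrix is again block diagonal, with that polynomial applied to each block, the matrix $\mathcal R=\mathcal T^n+a^{(1)}\mathcal T^{n-1}+\cdots+a^{(k)}I$ is block diagonal with $i$-th block $R_{\mu_i}^{s_i}$ (respectively $R_{\mu_j,C}^{s_j}$); equivalently, $\mathcal L$ is $R$-invariant and $\mathcal R$ is the matrix of $R|_{\mathcal L}$.

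Then I would translate ``find a particular solution $y^{(0)}\in\mathcal L$ of \eqref{lnde}'' into the linear system $\mathcal R\mathbf y=\mathbf f$, which by block-diagonality decouples into the independent systems $R_{\mu_i}^{s_i}\mathbf y_i=\mathbf f_i$, one per summand, where $\mathbf f_i$ collects the coordinates of the component of $f$ lying in that summand. If $\mu_i$ is not a characteristic number, part~1 of the solvability theorem for right-hand sides in $L_\mu^s$ (respectively Theorem~\ref{resonance} for right-hand sides in $L_{\mu,C}^s$) gives $\det R_{\mu_i}^{s_i}\neq 0$, so that block system has a unique solution. If $\mu_i$ is a characteristic number of multiplicity $m_i$, then on the enlarged summand $L_{\mu_i}^{m_i+s_i}$ part~2 of the same theorem exhibits, in the corresponding diagonal block of $\mathcal R$, exactly $m_i$ identically-zero columns and $m_i$ identically-zero rows, and certifies that the $(s_i+1)\times(s_i+1)$ core surviving their deletion — the matrix $R_{\mu_i}^{m_i,s_i}$ — is non-degenerate. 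As the identically-zero rows and columns of the block-diagonal matrix $\mathcal R$ are precisely the unions of the block-wise ones, deleting all of them leaves a block-diagonal matrix $\mathcal R_{nd}$ whose blocks are the $R_{\mu_i}^{s_i}$ (non-resonant summands) or $R_{\mu_i}^{m_i,s_i}$ (resonant summands), each non-degenerate; hence $\mathcal R_{nd}$ is non-degenerate, $\mathcal R_{nd}\mathbf y=\mathbf f$ has a unique solution, and assembling its components along the summands gives a sequence $y^{(0)}\in\mathcal L$ — with zero component in every kernel $K_{\mu_i}^{m_i}$ — satisfying $R y^{(0)}=f$, i.e.\ a particular solution of \eqref{lnde}.

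I expect the main obstacle to be the resonance bookkeeping, i.e.\ matching the informal instruction ``cross out the zero columns and rows'' to the precise earlier statements. One must verify that after enlarging a resonant summand $L_{\mu_i}^{s_i}$ to $L_{\mu_i}^{m_i+s_i}$ the diagonal block $(\mathcal T_{\mu_i}^{m_i+s_i})^n+\cdots+a^{(k)}I$ has its first $m_i$ columns and last $m_i$ rows identically zero — because the factor $(T-\mu_i I)^{m_i}$ annihilates $K_{\mu_i}^{m_i}$ and makes the product strictly upper triangular in the relevant corner, the other factors being upper triangular — and that the surviving core is invertible by the rank-nullity count already performed in the proofs of the previous theorems; this is exactly the content of those theorems for a single space, transplanted into one diagonal block. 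Everything else is routine, so beyond Theorem~\ref{geom}, its complex counterpart, and Theorem~\ref{resonance} no new ingredient is needed.
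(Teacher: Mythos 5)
Your proposal is correct and follows essentially the same route as the paper's own (very terse) proof, which simply appeals to the direct-sum decomposition of $\mathcal L\{L_{\mu_1}^{s_1},\ldots,L_{\mu_l,C}^{s_l}\}$ and the $T$-invariance of each summand; you carry out in detail exactly what that sketch implies — block-diagonality of $\mathcal T$ and $\mathcal R$, decoupling into per-block systems, and invocation of the earlier single-space theorems for the non-resonant and resonant blocks. In fact your write-up is more complete than the paper's, since it makes explicit the resonance bookkeeping (enlarging resonant summands and deleting the zero rows and columns blockwise) that the published proof leaves entirely implicit.
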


\begin{proof} The theorem follows from the fact that the operator $T$ is an automorphism in each of the spaces $L_{\mu_i}^{s_i},\,i=1,\ldots,k$ and $L_{\mu_j,C}^{s_j},\,j=k+1,\ldots,l$,and from the representation of $\mathcal L\{L_{\mu_1}^{s_1},\ldots,L_{\mu_k}^{s_k},L_{\mu_{k+1},C}^{s_{k+1}},\ldots,L_{\mu_l,C}^{s^l}\}$ as a direct sum of the spaces included in it.
\end{proof}

\section{Conclusion}

An algebraic approach on solving linear nonhomogeneous difference equations with constant coefficients is proposed. It can be used in any difference equations course as a way of showing that there can always be more than one way of solving a problem. The proposed method clearly shows the students how various mathematical disciplines are connected as the proposed approach uses mechanisms of linear algebra that the students should have been studying earlier.

The proposed approach can also be applied to differential equations course and requires just a slight adjustment. Also this approach can lead to better implementation of difference equation solvers.

\section*{Declarations}

\noindent \textbf{Conflict of Interest.} The author declares no competing interests.

\noindent\textbf{Funding.} This article is an output of a research project implemented as part of the Basic Research Program
at the National Research University Higher School of Economics (HSE University).

%%===========================================================================================%%
%% If you are submitting to one of the Nature Portfolio journals, using the eJP submission %%
%% system, please include the references within the manuscript file itself. You may do this %%
%% by copying the reference list from your .bbl file, paste it into the main manuscript .tex %%
%% file, and delete the associated \verb+\bibliography+ commands. %%
%%===========================================================================================%%

%\bibliography{xbib}% common bib file
%% if required, the content of .bbl file can be included here once bbl is generated
%%\input sn-article.bbl

%% Default %%
%%\input sn-sample-bib.tex%

\end{document}